\newcommand{\aspas}[1]{``{#1}''}
\newtheorem{theorem}{Theorem}[section]
\newtheorem{lemma}[theorem]{Lemma}
\newtheorem{example}[theorem]{Example}
\newtheorem{proposition}[theorem]{Proposition}
\theoremstyle{definition}
\newtheorem{definition}[theorem]{Definition}
\newtheorem{remark}[theorem]{Remark}
\newtheorem{corollary}[theorem]{Corollary}
\numberwithin{equation}{section}
\begin{document}


\renewcommand{\bf}{\bfseries}
\renewcommand{\sc}{\scshape}

\title[Injective hom-complexity between groups]%
{Injective hom-complexity between groups}

\author[C. A. Ipanaque Zapata]{Cesar A. Ipanaque Zapata$^1$}
\address[C. A. Ipanaque Zapata]{Departamento de Matem\'atica, IME-Universidade de S\~ao Paulo, Rua do Mat\~ao, 1010 CEP: 05508-090, S\~ao Paulo, SP, Brazil}
\email{cesarzapata@usp.br}
\thanks {$^1$Corresponding author: cesarzapata@usp.br}

\author[M.O. Gonzales Bohorquez]{Martha O. Gonzales Bohorquez}%
\address[M.O. Gonzales Bohorquez]{ Facultad de Ciencias Matemáticas, Universidad Nacional Mayor de San Marcos,   Lima, Perú}
\email{mgonzalesb@unmsm.edu.pe}


\subjclass[2020]{Primary 20D05, 20K01, 20K27, 20K30; Secondary 20K25, 20D60.}  

\keywords{Group, (injective) homomorphism, covering number, injective hom-complexity}

\begin{abstract} We present the notion of injective hom-complexity, leading to a connection between the covering number of a group and the sectional number of a group homomorphism, and provide estimates for computing this invariant.
\end{abstract}
\maketitle


\section{Introduction}\label{secintro}%
In this article, the term \aspas{homomorphism} refers to a group homomorphism. The symbol $G\to H$ means that there is a homomorphism from $G$ to $H$; otherwise, we write $G\not\rightarrow H$. The symbols $0$ or $1$ represent the trivial group. We write $\text{ord}(g)$ to refer to the order of the element $g$ and $|G|$ to refer to the order of a group $G$. The symbol $\lceil m\rceil$ denotes the least integer greater than or equal to $m$, while $\lfloor m\rfloor$ denotes the greatest integer less than or equal to $m$. We write the set $[k]=\{1,\ldots,k\}$.    
 
\medskip Given two groups $G$ and $H$, it is natural to pose the following question: Is there an injective homomorphism $G\to H$? The answer is yes if and only if $H$ has a copy of $G$ as a subgroup, that is, there exists a subgroup $K$ of $H$ such that $K$ is isomorphic to $G$.

\medskip Motivated by this question, we introduce the notion of injective hom-complexity between two groups $G$ and $H$, denoted by $\text{IC}(G;H)$ (Definition~\ref{defn:complexity}), along with its basic results. More precisely, $\text{IC}(G;H)$ is defined as the least positive integer $\ell$ such that there are $\ell$ distinct subgroups $G_j$ of $G$ with $G=G_1\cup\cdots\cup G_\ell$, and over each $G_j$, there exists an injective homomorphism $G_j\to H$. For instance, we have $\text{IC}(G;H)=1$ if and only if there is an injective homomorphism $G\to H$. 

\medskip From \cite[p. 492]{harver1959} (see also \cite[p. 1071]{rosenfeld1963}, \cite[p. 44]{cohn1994}), given a group $G$, the \textit{covering number} of $G$, denoted by $\sigma(G)$, is the least positive integer $\ell$ such that there are $\ell$ distinct proper subgroups $G_j$ of $G$ with $G=G_1\cup\cdots\cup G_\ell$.

\medskip We discuss a connection between the injective hom-complexity $\text{IC}(G;H)$ and the covering number $\sigma(G)$. For instance, we have $\sigma(G)\leq \text{IC}(G;H)$ whenever $H$ has not a copy of $G$ as a subgroup (Lemma~\ref{thm:lower-bound}).

\medskip The main results of this paper are as follows:
\begin{itemize}
    \item Given three groups $G,H$, and $K$, we present a relation between the injective hom-complexities $\mathrm{IC}(G;H), \mathrm{IC}(H;K)$, and $\mathrm{IC}(G;K)$ (Theorem~\ref{thm:inequality-three-graphs}). In particular, this shows that injective hom-complexity is a group invariant (Corollary~\ref{cor:invariant-iso-complexity}).
    \item We establish a general upper bound (Theorem~\ref{thm:general-upper-bound}).
    \item We present a formula for $\mathrm{IC}(G;H)$ whenever $H$ is a finite group having only cyclic proper subgroups of prime order (Theorem~\ref{thm:to-Zp}). 
    \item Sub-additivity (Theorem~\ref{thm:category-union}).
    \item We compare the injective hom-complexity of a product in terms of the injective hom-complexity of its factors (Theorem~\ref{thm:product-inequality-complexity}). 
\end{itemize}

\medskip This paper is organized into two sections: In Section~\ref{sec:one}, we introduce the notion of injective hom-complexity $\text{IC}(G;H)$ for two groups $G$ and $H$ (Definition~\ref{defn:complexity}) along with its basic properties. Section~\ref{sec:appli} presents new insights into group theory. We close this section with Remark~\ref{rem:complexity-sectionalnumber}, which presents a direct connection between the injective hom-complexity and the notion of sectional number.

\section{Injective hom-complexity}\label{sec:one}
In this section, we introduce the notion of injective hom-complexity along with its basic properties. Several examples are provided to support this theory. 

\subsection{Definition and Examples} 
We present the main definition of this work.

\begin{definition}[Injective hom-complexity]\label{defn:complexity}
Let $G$ and $H$ be groups. The \textit{injective hom-complexity} from $G$ to $H$, denoted by $\text{IC}(G;H)$, is the least positive integer $k$ such that there exist subgroups $G_1,\ldots,G_k$ of $G$ satisfying $G=G_1\cup\cdots\cup G_k$, and for each $G_i$, there exists an injective homomorphism $f_i:G_i\to H$. We set $\text{IC}(G;H)=\infty$ if no such integer $k$ exists.
\end{definition}

A collection $\mathcal{M}=\{f_i:G_i\to H\}_{i=1}^\ell$, where $G_1,\ldots,G_\ell$ are subgroups of $G$ such that $G=G_1\cup\cdots\cup G_\ell$ and  each $f_i:G_i\to H$ is an injective homomorphism, is called an \textit{injective quasi-homomorphism} from $G$ to $H$. An injective quasi-homomorphism $\mathcal{M}=\{f_i:G_i\to H\}_{i=1}^\ell$ is termed \textit{optimal} if $\ell=\text{IC}(G;H)$. Observe that a unitary injective quasi-homomorphism $\{f:G\to H\}$ is optimal and constitutes an injective homomorphism from $G$ to $H$. Additionally, any injective quasi-homomorphism $\mathcal{M}=\{f_i:G_i\to H\}_{i=1}^\ell$ induces a map $f:G\to H$ defined by  $f(g)=f_i(g)$, where $i$ is the least index such that $g\in G_i$. 

\medskip By Definition~\ref{defn:complexity}, we can make the following remark. 

\begin{remark}\label{rem:def-ob}
\noindent \begin{itemize}
    \item[(1)]  $\text{IC}(G;H)=1$  if and only if there exists an injective homomorphism $G\to H$, which is equivalent to saying that $H$ admits a copy of $G$ as a subgroup. 
    \item[(2)] $\text{IC}(G;H)=\infty$ whenever $G$ is an infinite group and $H$ is a finite group. In fact, suppose that $\text{IC}(G;H)=k<\infty$ and consider subgroups $G_1,\ldots,G_k$ of $G$ satisfying $G=G_1\cup\cdots\cup G_k$, and for each $G_i$, there exists an injective homomorphism $f_i:G_i\to H$. Since $H$ is finite, each $G_i$ is finite, and thus $G$ is finite, which leads to a contradiction. 
    \item[(3)] If $G$ is a cyclic group, then \[\text{IC}(G;H)=\begin{cases}
        1,&\hbox{ if $H$ has a copy of $G$ as a subgroup;}\\
         \infty,&\hbox{ if $H$ has not a copy of $G$ as a subgroup.}
    \end{cases}\]  
    \item[(4)] Let $G$ and $H$ be any groups. If $\mathrm{IC}(G;H)<\infty$, then for each element $a$ of $G$ there exists an element $b$ of $H$ such that $\text{ord}(b)=\text{ord}(a)$. In fact, let $k=\mathrm{IC}(G;H)$, and consider subgroups $G_1,\ldots,G_k$ of $G$ satisfying $G=G_1\cup\cdots\cup G_k$, and for each $G_i$, there exists an injective homomorphism $f_i:G_i\to H$ (i.e., $H$ has a copy of $G_i$ as a subgroup). Let $a\in G$. Then, $a\in G_i$ for some $i\in\{1,\ldots,k\}$. Let $b:=f(a)\in H$. Since $f_i$ is injective, $\text{ord}(b)=\text{ord}(a)$. 
    \item[(5)] If $\mathcal{M}=\{f_i:G_i\to H\}_{i=1}^\ell$ is an optimal nonunitary injective quasi-homomorphism (i.e., $\ell=\mathrm{IC}(G;H)>1$), then \begin{enumerate}
        \item[(i)] For each $i=1,\ldots,\ell$, we have $\bigcup_{\overset{j=1}{j\neq i}}^{\ell} G_j\subsetneq G$. In particular, $G_i\not\subseteq G_j$ for any $1\leq i\neq j\leq\ell$. 
        \item[(ii)] For any $1\leq i\neq j\leq\ell$, $\langle G_i\cup G_j\rangle= G$ or there is not an injective homomorphism $\langle G_i\cup G_j\rangle\to H$. 
    \end{enumerate}  
\end{itemize}
\end{remark}

Observe that the other implication of Remark~\ref{rem:def-ob}(4) does not hold in general. For instance, consider the group $G=C_2^{\infty}=\{(\alpha_n)_{n\geq 1}:~\alpha_n\in C_2 \text{ for all $n\geq 1$}\}$ with the component-wise operation, i.e., $(\alpha_n)_{n\geq 1}+(\alpha'_n)_{n\geq 1}=(\alpha_n+\alpha'_n)_{n\geq 1}$, and $H=C_2$. Note that each element of $C_2^{\infty}$ is of order $2$. While $\mathrm{IC}(C_2^{\infty};C_2)=\infty$ (see Remark~\ref{rem:def-ob}(2)). We will see, in Theorem~\ref{thm:general-upper-bound}, that the other implication of Remark~\ref{rem:def-ob}(4) holds whenever $G$ is finite.

\medskip We have the following example.

\begin{example}\label{exam:complexity-less-injective}
    Let $G$ and $H$ be groups. We have \begin{enumerate}
    \item[(1)] $\mathrm{IC}(0;H)=1$.
        \item[(2)] $\mathrm{IC}(G;0)=\begin{cases}
            1,&\hbox{ if $G=0$;}\\
            \infty,&\hbox{ if $G\neq 0$.}
        \end{cases}$
        \item[(3)] $\mathrm{IC}(C_2;H)=\begin{cases}
            1,&\hbox{ if $H$ has an element of order 2;}\\
            \infty,&\hbox{ if $H$ has not an element of order 2.}
        \end{cases}$
        \item[(4)] $\mathrm{IC}(G;C_2)=\begin{cases}
            1,&\hbox{ if $G=0$ or $C_2$;}\\
            \infty,&\hbox{ if $G$ is an infinite group or has an element }\\
            &\hbox{ of order at least 3.}
        \end{cases}$ 
    \end{enumerate}
\end{example}



\subsection{Triangular Inequality and Group Invariant}
Given a homomorphism $f:G\to H$ and a subgroup $K$ of $H$, the \textit{image inverse} of $K$ through $f$, $f^{-1}(K)$, is a subgroup of $G$. Note that the restriction map $f_|:f^{-1}(K)\to K$ is a homomorphism, called the \textit{restriction homomorphism}. 

\medskip Given three groups $G,H$, and $K$, there is a relation between the injective hom-complexities $\mathrm{IC}(G;H), \mathrm{IC}(H;K)$, and $\mathrm{IC}(G;K)$. 

\begin{theorem}[Triangular Inequality]\label{thm:inequality-three-graphs}
  Let $G,H$, and $K$ be groups. Then, \[\mathrm{IC}(G;K)\leq \mathrm{IC}(G;H)\cdot \mathrm{IC}(H;K).\] In particular, if there exists an injective homomorphism $G'\to G$, then \[\mathrm{IC}(G';H)\leq\mathrm{IC}(G;H)\] for any group $H$. Similarly, if there exists an injective homomorphism $H'\to H$, then $\mathrm{IC}(G;H)\leq \mathrm{IC}(G;H')$ for any group $G$.
  \end{theorem}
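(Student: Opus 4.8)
The plan is to prove the main inequality directly by manufacturing an explicit injective quasi-homomorphism from $G$ to $K$ out of optimal ones for the pairs $(G,H)$ and $(H,K)$, and then to deduce the two ``in particular'' assertions as immediate corollaries. First I would dispose of the trivial case: if either $\mathrm{IC}(G;H)$ or $\mathrm{IC}(H;K)$ equals $\infty$, then the right-hand side is $\infty$ (each factor is at least $1$), so there is nothing to prove. Assume therefore that $\mathrm{IC}(G;H)=m$ and $\mathrm{IC}(H;K)=n$ are both finite, and fix optimal injective quasi-homomorphisms $\{f_i:G_i\to H\}_{i=1}^m$ from $G$ to $H$ and $\{g_j:H_j\to K\}_{j=1}^n$ from $H$ to $K$; thus $G=\bigcup_{i=1}^m G_i$ and $H=\bigcup_{j=1}^n H_j$, with every $f_i$ and $g_j$ injective.

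The heart of the argument is to form, for each pair $(i,j)\in[m]\times[n]$, the subgroup $f_i^{-1}(H_j)$ of $G_i$ (and hence of $G$) together with the composite $g_j\circ (f_i)_{|}:f_i^{-1}(H_j)\to K$, where $(f_i)_{|}:f_i^{-1}(H_j)\to H_j$ is the restriction homomorphism introduced just before the theorem. Each such composite is injective, being the composition of the injective $g_j$ with the injective restriction $(f_i)_{|}$. I would then check that these $mn$ subgroups cover $G$: given $g\in G$, choose $i$ with $g\in G_i$ (possible since the $G_i$ cover $G$), then choose $j$ with $f_i(g)\in H_j$ (possible since the $H_j$ cover $H$); by definition of preimage, $g\in f_i^{-1}(H_j)$. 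Hence $\{g_j\circ (f_i)_{|}:f_i^{-1}(H_j)\to K\}_{(i,j)}$ is an injective quasi-homomorphism from $G$ to $K$ with at most $mn$ members (discarding any repeated subgroups only lowers the count), which yields $\mathrm{IC}(G;K)\leq mn=\mathrm{IC}(G;H)\cdot\mathrm{IC}(H;K)$.

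For the two specializations I would invoke Remark~\ref{rem:def-ob}(1), which states that $\mathrm{IC}(A;B)=1$ precisely when there is an injective homomorphism $A\to B$. If $G'\to G$ is injective, then $\mathrm{IC}(G';G)=1$, and applying the main inequality to the triple $G',G,H$ gives $\mathrm{IC}(G';H)\leq\mathrm{IC}(G';G)\cdot\mathrm{IC}(G;H)=\mathrm{IC}(G;H)$. Symmetrically, if $H'\to H$ is injective then $\mathrm{IC}(H';H)=1$, and applying the inequality to the triple $G,H',H$ gives $\mathrm{IC}(G;H)\leq\mathrm{IC}(G;H')\cdot\mathrm{IC}(H';H)=\mathrm{IC}(G;H')$.

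I do not expect a genuine obstacle here, since the construction is almost forced once one reads the paragraph preceding the theorem; the only point demanding real care is the covering verification, as it is exactly there that both covers are used simultaneously and where one must confirm that the preimage construction $f_i^{-1}(H_j)$ interacts correctly with pointwise membership. The well-definedness and injectivity of the restriction maps $(f_i)_{|}$ are precisely what the preliminary discussion of ``image inverse'' and the restriction homomorphism set up, so those steps should be routine.
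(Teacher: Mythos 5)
Your proof is correct and follows essentially the same route as the paper's: both form the preimage subgroups $f_i^{-1}(H_j)$, compose the restriction homomorphisms with the injective maps into $K$, and conclude $\mathrm{IC}(G;K)\leq mn$. Your write-up is in fact slightly more complete, since you explicitly verify the covering of $G$, dispose of the infinite case, and derive the two ``in particular'' assertions from Remark~\ref{rem:def-ob}(1), all of which the paper leaves implicit.
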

\begin{proof}
  Let $m=\mathrm{IC}(G;H)$ and $n=\mathrm{IC}(H;K)$. Let $\mathcal{M}_1=\{g_{i}:G_{i}\to H\}_{i=1}^{m}$ be an optimal injective quasi-homomorphism from $G$ to $H$, and $\mathcal{M}_2=\{h_{j}:H_{j}\to K\}_{j=1}^{n}$ be an optimal injective quasi-homomorphisms from $H$ to $K$. Define  $G_{i,j}:=g_{i}^{-1}(H_j)$ for each $i\in\{1,\ldots,m\}$ and each $j\in\{1,\ldots,n\}$. We have $G=\bigcup_{i,j=1}^{m,n} G_{i,j}$. Note that  $G_{i,j}\neq\varnothing$ is a subgraph of $G_i$ (and consequently a subgraph of $G$). We also consider the restriction homomorphism $(f_i)_|:G_{i,j}\to H_j$. This leads to the composition \[G_{i,j}\stackrel{(f_i)_|}{\to} H_j\stackrel{h_j}{\to} K.\] Since $g_{i}$ and $h_{j}$ are injective, the composition $G_{i,j}\stackrel{(f_i)_|}{\to} H_j\stackrel{h_j}{\to} K$ is also injective. Therefore, we obtain $\mathrm{IC}(G;K)\leq m\cdot n=\mathrm{IC}(G;H)\cdot \mathrm{IC}(H;K)$.  
\end{proof}

The inequality in Theorem~\ref{thm:inequality-three-graphs} is sharp. For instance, consider $K=H$; then $\mathrm{IC}(G;H)= \mathrm{IC}(G;H)\cdot \mathrm{IC}(H;H)$. 

\medskip From Theorem~\ref{thm:inequality-three-graphs}, we observe that if $G'\to G$ and $G\to G'$ are injective, then $\mathrm{IC}(G';H)=\mathrm{IC}(G;H)$ for any group $H$. Similarly, if $H'\to H$ and $H\to H'$ are injective, then $\mathrm{IC}(G;H')=\mathrm{C}(IG;H)$ for any group $G$. In particular, this shows  that injective hom-complexity is a group invariant, meaning it is preserved under group isomorphisms.

\begin{corollary}[Group Invariant]\label{cor:invariant-iso-complexity}
    If $G'$ is isomorphic to $G$ and $H'$ is isomorphic to $H$, then \[\mathrm{IC}(G;H)=\mathrm{IC}(G';H').\]
\end{corollary}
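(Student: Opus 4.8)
The plan is to derive Corollary~\ref{cor:invariant-iso-complexity} as an immediate consequence of the Triangular Inequality (Theorem~\ref{thm:inequality-three-graphs}), using the fact that an isomorphism is an injective homomorphism in both directions. The key observation is that the two ``In particular'' clauses of the theorem each give a one-sided comparison, and chaining them across both arguments forces equality.

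First I would handle the two arguments of $\mathrm{IC}(-;-)$ separately, since the theorem treats them independently. For the source argument: if $G'\cong G$, then there exist injective homomorphisms $G'\to G$ and $G\to G'$ (namely the isomorphism and its inverse). Applying the first ``In particular'' clause of Theorem~\ref{thm:inequality-three-graphs} with the injection $G'\to G$ yields $\mathrm{IC}(G';H)\leq\mathrm{IC}(G;H)$, and applying it with the injection $G\to G'$ yields $\mathrm{IC}(G;H)\leq\mathrm{IC}(G';H)$. Together these give $\mathrm{IC}(G';H)=\mathrm{IC}(G;H)$ for every group $H$. By the symmetric argument with the second ``In particular'' clause, if $H'\cong H$ then $\mathrm{IC}(G;H)=\mathrm{IC}(G;H')$ for every group $G$.

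Finally I would combine the two equalities. Taking $H$ to be $H'$ in the first equality gives $\mathrm{IC}(G';H')=\mathrm{IC}(G;H')$, and then the second equality (applied with source group $G$) gives $\mathrm{IC}(G;H')=\mathrm{IC}(G;H)$. Chaining these yields $\mathrm{IC}(G';H')=\mathrm{IC}(G;H)$, which is the claim.

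There is essentially no obstacle here, since all the real work is done in Theorem~\ref{thm:inequality-three-graphs}; the only point requiring a moment's care is the bookkeeping of applying the one-sided bounds in both directions and in both arguments, making sure to quantify over the correct auxiliary group at each step (``for any $H$'' and ``for any $G$''). One should also note explicitly that a group isomorphism and its inverse are both injective homomorphisms, which is what licenses invoking the theorem's hypotheses.
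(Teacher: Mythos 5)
Your proof is correct and follows exactly the paper's route: the paper also deduces the corollary from the two ``In particular'' clauses of Theorem~\ref{thm:inequality-three-graphs}, applying them in both directions via the isomorphism and its inverse and then chaining the resulting equalities. No issues.
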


\subsection{Lower Bound} From \cite[p. 492]{harver1959} (see also \cite[p. 1071]{rosenfeld1963}, \cite[p. 44]{cohn1994}), given a group $G$, the \textit{covering number} of $G$, denoted by $\sigma(G)$, is the least positive integer $\ell$ such that there are $\ell$ distinct proper subgroups $G_j$ of $G$ with $G=G_1\cup\cdots\cup G_\ell$. We set $\sigma(G)=\infty$ if no such $\ell$ exists. Observe that $\sigma(G)\geq 3$ for any group $G$ \cite[Theorem 1, p. 492]{harver1959}.

\medskip We have the following lower bound for the injective hom-complexity. 

\begin{lemma}[Lower Bound]\label{thm:lower-bound}
 Let $G$ and $H$ be groups such that $H$ does not have a copy of $G$ as a subgroup. The inequality \[\sigma(G)\leq \mathrm{IC}(G;H)\] holds. 
\end{lemma}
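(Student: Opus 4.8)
The plan is to show that any optimal injective quasi-homomorphism from $G$ to $H$ yields a covering of $G$ by proper subgroups, so that $\sigma(G)$, being the minimum size of such a covering, is at most $\mathrm{IC}(G;H)$. Set $\ell = \mathrm{IC}(G;H)$. First I would dispose of the trivial cases: if $\ell = \infty$ the inequality holds vacuously, so I may assume $\ell$ is finite. Take an optimal injective quasi-homomorphism $\mathcal{M} = \{f_i : G_i \to H\}_{i=1}^{\ell}$, so that $G = G_1 \cup \cdots \cup G_\ell$ with each $G_i$ a subgroup of $G$ admitting an injective homomorphism into $H$.

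The crux is that each $G_i$ must be a \emph{proper} subgroup of $G$. Indeed, if some $G_i = G$, then $f_i : G \to H$ would be an injective homomorphism, meaning $H$ has a copy of $G$ as a subgroup, contradicting the hypothesis. Hence every $G_i \subsetneq G$, and $G = G_1 \cup \cdots \cup G_\ell$ is a covering of $G$ by $\ell$ proper subgroups. I would be slightly careful here about the wording of the definition of $\sigma(G)$, which requires $\ell$ \emph{distinct} proper subgroups: if two of the $G_i$ coincide we may simply discard the repetition, producing a covering by at most $\ell$ distinct proper subgroups, which only helps the inequality.

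By the definition of the covering number as the \emph{least} such $\ell$, the existence of a covering of $G$ by (at most) $\mathrm{IC}(G;H)$ proper subgroups gives $\sigma(G) \leq \mathrm{IC}(G;H)$, completing the argument.

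I do not anticipate a serious obstacle, since the proof is essentially a one-line observation once the setup is unpacked; the only point requiring attention is the \textbf{properness} of the covering subgroups, which is precisely where the hypothesis that $H$ contains no copy of $G$ is used. A minor subtlety worth noting is that $\sigma(G)$ is only defined (finite) when $G$ admits \emph{some} covering by proper subgroups — equivalently when $G$ is noncyclic — but in the case where $G$ is cyclic one has $\sigma(G) = \infty$, and then the inequality $\sigma(G) \leq \mathrm{IC}(G;H)$ forces $\mathrm{IC}(G;H) = \infty$ as well; this is consistent with Remark~\ref{rem:def-ob}(3), which states that a cyclic $G$ not embedding in $H$ has $\mathrm{IC}(G;H) = \infty$, so the argument above already delivers the correct conclusion in every case.
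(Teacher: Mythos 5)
Your proposal is correct and follows essentially the same route as the paper: discard the $\mathrm{IC}(G;H)=\infty$ case, take an optimal injective quasi-homomorphism, observe that the hypothesis forces each $G_i$ to be a proper subgroup, and conclude by minimality of $\sigma(G)$. Your explicit handling of the distinctness of the covering subgroups and of the cyclic case is a small extra care the paper omits, but it does not change the argument.
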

\begin{proof}
   Since $H$ does not have a copy of $G$ as a subgroup, $\mathrm{IC}(G;H)>1$. If $\mathrm{IC}(G;H)=\infty$, then the inequality $\sigma(G)\leq \mathrm{IC}(G;H)$ always hold. For the case $\mathrm{IC}(G;H)<\infty$, let $n=\mathrm{IC}(G;H)$ and consider $G_1,\ldots, G_n$ subgroups of $G$ such that $G=G_1\cup\ldots\cup G_n$ and for each $G_j$, there exists an injective homomorphism $G_j\to H$. Since $n>1$, each $G_j$ is a proper subgroup of $G$. Then, $\sigma(G)\leq n=\mathrm{IC}(G;H)$.  
\end{proof}

Lemma~\ref{thm:lower-bound} implies the following example.

\begin{example}
  \noindent
    \begin{enumerate}
        \item[(1)] Let $C$ be a cyclic group. Since $\sigma(C)=\infty$ \cite[p. 491]{harver1959}, \[\mathrm{IC}(C;H)=\infty\] for any group $H$ that does not have a copy of $C$ as a subgroup.
         \item[(1)] Since $\sigma(\mathbb{Q})=\infty$ \cite[p. 491]{harver1959}, \cite[p. 29]{zapata2024}, \[\mathrm{IC}(\mathbb{Q};H)=\infty\] for any group $H$ that does not have a copy of $\mathbb{Q}$ as a subgroup.
    \end{enumerate}
\end{example}


\subsection{Upper Bound} 
 Before to present an upper for the injective hom-complexity, we recall the notion of cyclic covering number given in (\cite[Definition 2.18]{zapata2025}, cf. after of \cite[Example 3.12]{zapata2024}).

\medskip   Let $G$ be a group. The \textit{cyclic covering number} of $G$, denoted by $\sigma_c(G)$, is the least positive integer $m$ such that there exist cyclic proper subgroups $C_1,\ldots,C_m$ of $G$ such that $G=C_1\cup\cdots\cup C_m$. We set $\sigma_c(G)=\infty$ if no such $m$ exists. Observe that $\sigma_c(G)\geq\sigma(G)\geq 3$. 

\medskip For instance, we have $\sigma_c(\mathbb{Z}\times C_2)=\sigma(\mathbb{Z}\times C_2)=3$ because $\mathbb{Z}\times C_2=\langle (1,\overline{0})\rangle\cup \langle (1,\overline{1})\rangle\cup \langle (0,\overline{1})\rangle$.

\medskip Let $G$ be a finite noncyclic group. Every (proper) cyclic subgroup $\langle x\rangle$ of $G$ has $\varphi(\text{ord}(x))$ generators. For each $x\in G$, the number of distinct cyclic subgroups of order $\text{ord}(x)$ is given by \[\dfrac{\text{the number of distinct elements of order $\text{ord}(x)$}}{\varphi(\text{ord}(x))},\] where $\varphi$ is the Euler's totient function. Hence, the number of distinct nontrivial proper cyclic subgroups of $G$ is $\sum_{\overset{x\in G}{x\neq 1}}\dfrac{1}{\varphi(\text{ord}(x))}$, and thus \begin{eqnarray}\label{eqn:ciclic-number}
    \sigma_c(G)&\leq\displaystyle\sum_{\overset{x\in G}{x\neq 1}}\dfrac{1}{\varphi(\text{ord}(x))}.
\end{eqnarray}  

\medskip Inequality~(\ref{eqn:ciclic-number}) can be strict (see \cite[Example 2.19(3)]{zapata2025}). We have the following example.

\begin{example}\cite[Example 2.19]{zapata2025}\label{exam:cyclic-z2z2}
    \noindent \begin{enumerate}
        \item[(1)] Let $G$ be a finite noncyclic \textit{elementary $p$-group} for some prime $p$, i.e., all the non-identity elements of $G$ have the same order $p$. In this case, $\varphi(\text{ord}(x))=\varphi(p)=p-1$ for any $x\in G$, $x\neq 1$. In addition, if $\langle x\rangle\subseteq \langle y\rangle$ for some $x,y\in G\setminus\{1\}$, then $\langle x\rangle= \langle y\rangle$. Hence, \[\sigma_c(G)= \dfrac{|G|-1}{p-1}.\] 
        \item[(2)] Let $n\geq 2$ be an integer and $p$ be a prime. Let $C_p^n=C_p\times\cdots\times C_p$ ($n$ times). By Item (1), we have \[\sigma_{c}\left(C_p^n\right)=\dfrac{p^n-1}{p-1}.\] On the other hand, the equality $\sigma\left(C_p^n\right)=p+1$ holds \cite[Theorem 2, p. 45]{cohn1994}, \cite[Theorem, p. 1071]{rosenfeld1963}. For instance, $\sigma_{c}\left(C_p\times C_p\right)=\sigma\left(C_p\times C_p\right)=p+1$.
    \end{enumerate} 
\end{example}

Now, we have the following upper bound for the IC.

\begin{theorem}[Upper Bound]\label{thm:general-upper-bound}
Let $G$ and $H$ be groups such that for each element $a$ of $G$, there exists an element $b$ of $H$ for which $\mathrm{ord}(b)=\mathrm{ord}(a)$. We have \[\mathrm{IC}(G;H)\leq \sigma_c(G).\]      
\end{theorem}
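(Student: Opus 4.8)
The plan is to build an explicit injective quasi-homomorphism from $G$ to $H$ whose size equals $\sigma_c(G)$, by promoting an optimal cyclic cover of $G$. First I would dispose of the trivial case: if $\sigma_c(G)=\infty$ the inequality holds automatically, so I may assume $m:=\sigma_c(G)<\infty$ and fix cyclic proper subgroups $C_1,\ldots,C_m$ of $G$ with $G=C_1\cup\cdots\cup C_m$. The whole proof then reduces to turning each cyclic piece into an injective homomorphism into $H$.

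For the main step, write $C_i=\langle x_i\rangle$ and set $n_i:=\mathrm{ord}(x_i)$. The hypothesis supplies an element $b_i\in H$ with $\mathrm{ord}(b_i)=n_i$, and I would define $f_i\colon C_i\to H$ on the generator by $f_i(x_i)=b_i$, extended multiplicatively via $f_i(x_i^k)=b_i^k$. Two checks are needed. Well-definedness: when $n_i<\infty$ it amounts to $b_i^{n_i}=1$, which holds since $\mathrm{ord}(b_i)=n_i$, and when $C_i\cong\mathbb{Z}$ there is no relation to verify. Injectivity: computing the kernel, $x_i^k\in\ker f_i$ precisely when $b_i^k=1$, and the equality $\mathrm{ord}(b_i)=\mathrm{ord}(x_i)$ forces this to occur exactly when $x_i^k=1$; hence $\ker f_i$ is trivial. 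Thus each $f_i$ is an injective homomorphism, in fact an isomorphism of $C_i$ onto $\langle b_i\rangle$.

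Assembling the family $\{f_i\colon C_i\to H\}_{i=1}^{m}$, the subgroups $C_i$ cover $G$ and every $f_i$ is injective, so this is an injective quasi-homomorphism from $G$ to $H$; by Definition~\ref{defn:complexity} we conclude $\mathrm{IC}(G;H)\leq m=\sigma_c(G)$. I do not expect a genuine obstacle: the only point requiring care is treating finite and infinite orders uniformly when verifying that $x_i\mapsto b_i$ extends to an injective map, and it is precisely the equality of orders in the hypothesis that makes the kernel computation close in both cases.
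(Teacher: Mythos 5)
Your proposal is correct and follows essentially the same route as the paper: handle $\sigma_c(G)=\infty$ trivially, take an optimal cyclic cover $G=C_1\cup\cdots\cup C_m$, and send a generator of each $C_i$ to an element of $H$ of the same order to obtain an injective homomorphism $C_i\to H$. Your explicit verification of well-definedness and of the kernel computation (including the infinite-order case) is slightly more detailed than the paper's, but the argument is the same.
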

\begin{proof}    
  If $\sigma_c(G)=\infty$, then the inequality $\mathrm{IC}(G;H)\leq \sigma_c(G)$ always holds. Now, suppose $\sigma_c(G)=k<\infty$ and consider $\{C_1,\ldots,C_k\}$ a collection of cyclic proper subgroups of $G$ such that $G=C_1\cup\cdots\cup C_k$. Let $C_j=\langle a_j\rangle$ for each $j=1,\ldots,k$. For each $j=1,\ldots,k$, there exists $b_j\in H$ such that $\mathrm{ord}(b_j)=\mathrm{ord}(a_j)$. The map $h:\langle a_j\rangle\to H$ given by $h(a_j^m)=b_j^m$ for any $m\in\mathbb{Z}$ is an injective homomorphism. Therefore, $\mathrm{IC}(G;H)\leq k=\sigma_c(G)$.
\end{proof}

Theorem~\ref{thm:general-upper-bound} together with the inequality $\mathrm{IC}(G;H)\geq \sigma(G)$ (see Lemma~\ref{thm:lower-bound}) implies the following result. 

\begin{corollary}\label{prop:cov-equal-cyclic}
 Let $G$ and $H$ be groups such that $H$ does not admit a copy of $G$, and for each element $a$ of $G$ there exists an element $b$ of $H$ for which $\mathrm{ord}(b)=\mathrm{ord}(a)$. Then \[\sigma(G)\leq\mathrm{IC}(G;H)\leq\sigma_c(G).\] In particular, if $\sigma(G)=\sigma_c(G)$, then \[\mathrm{IC}(G;H)=\sigma_c(G)=\sigma(G).\]     
\end{corollary}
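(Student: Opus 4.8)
The plan is to recognize that Corollary~\ref{prop:cov-equal-cyclic} is a direct synthesis of two results already established in the excerpt, so the proof amounts to invoking them and observing that their hypotheses are exactly the ones assumed here. First I would verify that the standing assumptions license both ingredients: the hypothesis that $H$ does not admit a copy of $G$ as a subgroup is precisely what Lemma~\ref{thm:lower-bound} (Lower Bound) requires, yielding $\sigma(G)\leq\mathrm{IC}(G;H)$; and the hypothesis that for each element $a$ of $G$ there exists an element $b$ of $H$ with $\mathrm{ord}(b)=\mathrm{ord}(a)$ is precisely what Theorem~\ref{thm:general-upper-bound} (Upper Bound) requires, yielding $\mathrm{IC}(G;H)\leq\sigma_c(G)$.

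Chaining these two inequalities immediately gives the displayed sandwich
\[
\sigma(G)\leq\mathrm{IC}(G;H)\leq\sigma_c(G),
\]
which is the first assertion. For the \aspas{in particular} clause, I would simply note that the standing inequality $\sigma_c(G)\geq\sigma(G)$ (recorded just after the definition of $\sigma_c$) always holds, so if the additional hypothesis $\sigma(G)=\sigma_c(G)$ is imposed, the outer two terms of the sandwich coincide. A squeeze argument then forces $\mathrm{IC}(G;H)$ to equal their common value, giving $\mathrm{IC}(G;H)=\sigma_c(G)=\sigma(G)$.

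There is essentially no obstacle here, since both bounds are black-boxed; the only thing worth double-checking is that the two hypotheses are mutually consistent and genuinely achievable together (e.g.\ for $G=C_p\times C_p$ and a suitable $H$ that contains elements of order $p$ but no copy of $C_p\times C_p$), so that the corollary is not vacuous. Indeed, Example~\ref{exam:cyclic-z2z2}(2) already exhibits groups with $\sigma(G)=\sigma_c(G)=p+1$, confirming that the hypothesis $\sigma(G)=\sigma_c(G)$ of the final clause is satisfied in natural cases and that the equality $\mathrm{IC}(G;H)=p+1$ can be extracted from it.
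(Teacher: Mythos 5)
Your proposal is correct and matches the paper exactly: the corollary is stated there as an immediate consequence of Lemma~\ref{thm:lower-bound} and Theorem~\ref{thm:general-upper-bound}, combined with the standing inequality $\sigma_c(G)\geq\sigma(G)$ for the squeeze in the final clause. No further comment is needed.
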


We have the following example.

\begin{example}\label{exam:proper-sec-number-codomain-z2z2}
 \noindent  \begin{enumerate}
      \item[(1)] Let $p$ be a prime number and $H$ be a group admitting an element of order $p$ such that $\mathrm{IC}(C_p\times C_p;H)>1$. By Example~\ref{exam:cyclic-z2z2}(2), $\sigma_{c}(C_p\times C_p)=\sigma(C_p\times C_p)=p+1$. Hence, by Corollary~\ref{prop:cov-equal-cyclic}, \[\mathrm{IC}(C_p\times C_p;H)= p+1.\] 
      \item[(2)] Let $H$ be a group admitting an element of order $3$ such that $\mathrm{IC}(C_3\times C_3\times C_3;H)>1$. Observe that $\sigma_{c}(C_3\times C_3\times C_3)=13$ and $\sigma(C_3\times C_3\times C_3)=4$ (see Example~\ref{exam:cyclic-z2z2}(2)). By Corollary~\ref{prop:cov-equal-cyclic}, \[4\leq \mathrm{IC}(C_3\times C_3\times C_3;H)\leq 13.\] We will see, in Example~\ref{exam:codomain-zp}, there exists a group $H$ such that $\mathrm{IC}(C_3\times C_3\times C_3;H)=13$.
  \end{enumerate} 
\end{example}

On the other hand, Theorem~\ref{thm:general-upper-bound} also implies the following result.

\begin{corollary}\label{cor:ic-ciclic-equal}
 Let $G$ and $H$ be groups such that $H$ does not admit a copy of $G$, and for each element $a$ of $G$ there exists an element $b$ of $H$ for which $\mathrm{ord}(b)=\mathrm{ord}(a)$. If any proper subgroup of $H$ is cyclic, then \[\mathrm{IC}(G;H)=\sigma_c(G).\]   
\end{corollary}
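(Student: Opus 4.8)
The plan is to establish the two inequalities $\mathrm{IC}(G;H)\le\sigma_c(G)$ and $\sigma_c(G)\le\mathrm{IC}(G;H)$ separately. The first is immediate and does not use the cyclicity hypothesis at all: the assumption that for each $a\in G$ there is $b\in H$ with $\mathrm{ord}(b)=\mathrm{ord}(a)$ is exactly the hypothesis of Theorem~\ref{thm:general-upper-bound}, which therefore yields $\mathrm{IC}(G;H)\le\sigma_c(G)$ outright. So the whole content lies in the reverse inequality $\sigma_c(G)\le\mathrm{IC}(G;H)$, and this is where \emph{every proper subgroup of $H$ is cyclic} must be used.

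For the reverse inequality I would proceed as follows. If $\mathrm{IC}(G;H)=\infty$ there is nothing to prove, so set $n:=\mathrm{IC}(G;H)<\infty$ and fix an optimal injective quasi-homomorphism $\mathcal{M}=\{f_i\colon G_i\to H\}_{i=1}^{n}$ with $G=G_1\cup\cdots\cup G_n$ and each $f_i$ injective. Since $H$ has no copy of $G$, Remark~\ref{rem:def-ob}(1) gives $n>1$, so each $G_i$ is a \emph{proper} subgroup of $G$. The crux is to show that each $G_i$ is \emph{cyclic}: granting this, $\{G_i\}_{i=1}^n$ is a covering of $G$ by proper cyclic subgroups, whence $\sigma_c(G)\le n=\mathrm{IC}(G;H)$ by the definition of the cyclic covering number, completing the sandwich. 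To prove cyclicity, observe that $f_i$ maps $G_i$ isomorphically onto $f_i(G_i)\le H$; when $f_i(G_i)$ is a \emph{proper} subgroup of $H$, the hypothesis makes it cyclic, and hence $G_i\cong f_i(G_i)$ is cyclic, as desired.

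I expect the main obstacle to be the one remaining case $f_i(G_i)=H$, equivalently $G_i\cong H$: here the hypothesis speaks only about proper subgroups of $H$ and gives no information, so a noncyclic $H$ leaves $G_i$ possibly noncyclic. I would try to rule this case out, but it cannot be ruled out under the stated hypotheses. For instance, take $G=C_2\times C_2\times C_2$ and $H=C_2\times C_2$: the element orders match, $H$ has no copy of $G$ because $|H|<|G|$, and every proper subgroup of $H$ is cyclic, so all three hypotheses hold; yet $G$ is the union of three of its maximal subgroups, each isomorphic to $H=C_2\times C_2$ and so injecting into $H$, giving $\mathrm{IC}(G;H)=\sigma(G)=3$ (the lower bound being Lemma~\ref{thm:lower-bound}), while $\sigma_c(G)=7$ by Example~\ref{exam:cyclic-z2z2}.

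Thus the cyclicity step genuinely breaks when $G$ contains a copy of the noncyclic group $H$, and the corollary as written appears to require strengthening. To close the argument one should, e.g., assume $H$ is cyclic, so that every $G_i\cong f_i(G_i)\le H$ is automatically cyclic, or assume $G$ has no subgroup isomorphic to $H$, which forces each $f_i(G_i)$ to be a proper subgroup of $H$. Under either strengthening the three-line cyclicity argument above goes through, and combined with the upper bound from Theorem~\ref{thm:general-upper-bound} it delivers $\mathrm{IC}(G;H)=\sigma_c(G)$.
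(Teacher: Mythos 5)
Your analysis is correct, and you have in fact found a genuine flaw in the corollary as stated rather than a gap in your own argument. The paper offers no proof --- it merely asserts that Theorem~\ref{thm:general-upper-bound} ``also implies'' the result --- and the implicit argument is exactly the one you reconstruct: the upper bound $\mathrm{IC}(G;H)\le\sigma_c(G)$ from Theorem~\ref{thm:general-upper-bound}, plus a lower bound obtained by noting that in an optimal injective quasi-homomorphism each $G_i$ is proper (since $n>1$ by Remark~\ref{rem:def-ob}(1)) and embeds into $H$, hence ``is cyclic.'' As you point out, that last step only works when $f_i(G_i)$ is a \emph{proper} subgroup of $H$; when $G_i\cong H$ and $H$ is noncyclic it fails. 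Your counterexample checks out: with $G=C_2^3$ and $H=C_2\times C_2$ all three hypotheses hold, the three index-two subgroups $\{a=0\}$, $\{b=0\}$, $\{a+b=0\}$ of $\mathbb{F}_2^3$ cover $G$ and are each isomorphic to $H$, so $\mathrm{IC}(C_2^3;C_2^2)=3=\sigma(C_2^3)$ by Lemma~\ref{thm:lower-bound}, while $\sigma_c(C_2^3)=7$ by Example~\ref{exam:cyclic-z2z2}(2). So the corollary is false for noncyclic $H$ with only cyclic proper subgroups (the $C_p\times C_p$, $Q_{2^n}$, and $C_q\rtimes C_p$ cases of the Miller--Moreno classification the paper itself records) whenever $G$ contains a copy of $H$.

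Your proposed repairs are the right ones. Note that the only place the paper actually invokes Corollary~\ref{cor:ic-ciclic-equal} is Example~\ref{exam:codomain-zp}, where $H=C_p$ is cyclic; there every subgroup of $H$ (proper or not) is cyclic, so each $G_i\cong f_i(G_i)\le H$ is cyclic and your three-line argument closes. Thus the statement should be corrected to assume either that $H$ is cyclic, or that every subgroup of $H$ (including $H$ itself) is cyclic, or that $G$ contains no copy of $H$; under any of these the paper's intended application is preserved and your proof is complete.
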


\begin{remark}
   Finite groups whose only proper subgroups are cyclic are fully classified \cite{miller-moreno1903}. These are called minimal non-cyclic groups or sometimes Miller–Moreno groups. A finite group $G$ has only cyclic proper subgroups if and only if $G$ is one of the following:
\begin{itemize}
    \item Cyclic groups $C_n$.
    \item Generalized quaternion groups $Q_{2^n}$, $n\geq 3$, of order $2^n$.
    \item Non-abelian groups $C_q\rtimes C_p$ of order $pq$ (with $p,q$ primes, $p<q$, $p|q-1$).
\end{itemize}

In particular, a finite group $G$ has only cyclic proper subgroups of prime order if and only if $G$ is one of the following:
\begin{itemize}
    \item $C_{p^2}$.
    \item $C_p\times C_p$.
    \item Any group of order $pq$ (cyclic or the non-abelian semidirect product $C_q\rtimes C_p$) with $p,q$ primes, $p<q$.
\end{itemize} 
\end{remark}

We recall the famous Poincaré's formula or inclusion-exclusion formula. Given finite sets $A_1,\ldots,A_k$ with $k\geq 1$, the following equality \[|A_1\cup\cdots\cup A_k|=\sum_{n=1}^{k}(-1)^{n+1}\sum_{\overset{J\subseteq [k]}{|J|=n}}|\bigcap_{j\in J} A_j|\] always holds. 

\medskip We have the following statement.

\begin{theorem}\label{thm:to-Zp}
  Let $G$ be a nontrivial finite group.  \begin{enumerate}
      \item[(1)] If $\mathrm{IC}(G;C_p)<\infty$ with $p$ a prime number, then \[|G|=\mathrm{IC}(G;C_p)(p-1)+1 \quad \text{ and } \quad p~|~\mathrm{IC}(G;C_p)-1.\]  
      \item[(2)] If $G$ is not abelian of order $pq$ with $p,q$ primes, $p<q$, then \[\mathrm{IC}(G;C_{pq})=q+1.\] 
      \item[(3)] If $G=C_p\times C_p$ with $p$ a prime number, then  \[\mathrm{IC}(C_p\times C_p;C_{p^2})=p+1.\] 
  \end{enumerate}  
\end{theorem}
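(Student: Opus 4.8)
The plan is to treat all three parts through one mechanism: an injective homomorphism $G_i\to H$ exists exactly when $G_i$ is isomorphic to a subgroup of $H$, so computing $\mathrm{IC}(G;H)$ reduces to finding the minimal cover of $G$ by those subgroups of $G$ that embed into $H$. In each case this family of embeddable subgroups is small and explicit, and the count of how many are forced into any cover is what I would extract.

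For part~(1), since $\mathrm{IC}(G;C_p)<\infty$, Remark~\ref{rem:def-ob}(4) produces for every $a\in G$ an element of $C_p$ of the same order; as $C_p$ has only elements of order $1$ and $p$, every nonidentity element of $G$ has order $p$. Hence $G$ has exponent $p$, and by Cauchy's theorem $|G|=p^n$ for some $n\ge 1$. The subgroups of $G$ that embed into $C_p$ are precisely the trivial subgroup and the cyclic subgroups of order $p$; writing $N$ for the number of the latter, each nonidentity $a$ lies in the unique order-$p$ subgroup $\langle a\rangle$, so every cover must contain all $N$ of them, whence $\mathrm{IC}(G;C_p)=N$. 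Counting nonidentity elements (distinct order-$p$ subgroups meet only in the identity) gives $|G|-1=N(p-1)$, i.e. $|G|=\mathrm{IC}(G;C_p)(p-1)+1$. Finally $N=\frac{p^n-1}{p-1}=1+p+\cdots+p^{n-1}$, so $N-1=p(1+p+\cdots+p^{n-2})$ is divisible by $p$.

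For part~(2), here $G\cong C_q\rtimes C_p$. A Sylow analysis shows the Sylow $q$-subgroup is unique and normal (from $n_q\mid p$, $n_q\equiv 1\pmod q$, and $p<q$), while nonabelianness forces $n_p=q$ Sylow $p$-subgroups; thus every element of $G$ has order $1,p,$ or $q$ (no element of order $pq$, else $G$ would be cyclic). Since $C_{pq}$ is cyclic while $G$ is not, $G$ does not embed into $C_{pq}$, and $C_{pq}$ realizes all the orders occurring in $G$; moreover every proper subgroup of the cyclic group $C_{pq}$ is cyclic, so Corollary~\ref{cor:ic-ciclic-equal} applies and gives $\mathrm{IC}(G;C_{pq})=\sigma_c(G)$. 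It then remains to compute $\sigma_c(G)$: every order-$p$ element lies only in its own Sylow $p$-subgroup and every order-$q$ element only in the unique $C_q$, so a cover must use all $q$ Sylow $p$-subgroups together with $C_q$, giving $\sigma_c(G)=q+1$; the element count $(q-1)+q(p-1)=pq-1=|G|-1$ confirms these $q+1$ cyclic subgroups do cover $G$. For part~(3), since $C_p\times C_p$ has exponent $p$ it contains no copy of $C_{p^2}$, so exactly as in part~(1) the subgroups embedding into $C_{p^2}$ are the trivial one and the $p+1$ subgroups of order $p$, all forced into any cover, whence $\mathrm{IC}(C_p\times C_p;C_{p^2})=p+1$ (equivalently the squeeze $\sigma=\sigma_c=p+1$ from Example~\ref{exam:cyclic-z2z2}(2) via Corollary~\ref{prop:cov-equal-cyclic}).

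The main obstacle is the structural bookkeeping in part~(2): carrying out the Sylow count, checking the hypotheses needed to invoke Corollary~\ref{cor:ic-ciclic-equal}, and verifying that no element of one order can be absorbed by a proper subgroup carrying a different order, so that all $q+1$ subgroups are genuinely forced. By contrast the arithmetic in part~(1) (the geometric-series divisibility) and the element counts are routine once the exponent-$p$ and order structure has been pinned down.
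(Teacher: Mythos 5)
Your proof is correct, and for parts (1) and (2) it takes a genuinely different route from the paper. The paper works with an abstract optimal cover $G=G_1\cup\cdots\cup G_k$, shows that all pairwise (and higher) intersections are trivial, and then extracts $|G|=k(p-1)+1$ from the inclusion--exclusion (Poincar\'e) formula together with the binomial identity $\sum_{n=2}^{k}(-1)^{n+1}\binom{k}{n}=1-k$; the divisibility $p\mid k-1$ then comes from Lagrange's theorem applied to $p\mid|G|$. You instead identify the admissible subgroups explicitly (trivial and cyclic of order $p$, resp.\ the Sylow subgroups in part (2)), observe that each nonidentity element forces its own cyclic subgroup into any cover --- so the optimal cover is unique and $\mathrm{IC}$ equals the number of such subgroups --- and then count elements directly; your divisibility claim comes from the geometric series $N=1+p+\cdots+p^{n-1}$ after pinning down $|G|=p^n$ via Cauchy. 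Your argument buys a bit more (it exhibits the unique optimal cover and is more elementary, avoiding inclusion--exclusion), while the paper's computation is more uniform across parts (1) and (2). In part (2) you also reroute through Corollary~\ref{cor:ic-ciclic-equal} and a computation of $\sigma_c(C_q\rtimes C_p)$ rather than repeating the inclusion--exclusion count; both are valid, and your Sylow bookkeeping ($n_q=1$, $n_p=q$, no elements of order $pq$) is exactly what is needed. For part (3) the paper only says the argument is ``similar,'' so your explicit treatment is if anything more complete.
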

\begin{proof}
 \noindent\begin{enumerate}
     \item[(1)]   Let $k=\mathrm{IC}(G;C_p)<\infty$, and consider nontrivial subgroups $G_1,\ldots,G_k$ of $G$ satisfying $G=G_1\cup\cdots\cup G_k$, and for each $G_i$, there exists an injective homomorphism $f_i:G_i\to C_p$ (i.e., $C_p$ has a copy of $G_i$ as a nontrivial subgroup). Since the only subgroups of $C_p$ are the trivial group and itself, each $f_i$ is an isomorphism, and thus, the only subgroups of each $G_i$ are the trivial group and itself. Then, for each $J\subseteq [k]$ with $|J|\geq 2$, observe that $\bigcap_{j\in J} G_j=1$, is the trivial group. Otherwise, $\bigcap_{j\in J} G_j=G_i$ for any $i\in J$. In particular, there exist distinct $i,i'\in J$ such that $G_i\subseteq G_{i'}$ (and of course, $G_i\cup G_{i'}=G_{i'}$), and hence $\mathrm{IC}(G;C_p)<k$, which leads to a contradiction.

    By the famous Poincaré's formula, we have:
    \begin{align*}
        |G|&=\sum_{n=1}^{k}(-1)^{n+1}\sum_{\overset{J\subseteq [k]}{|J|=n}}|\bigcap_{j\in J} G_j|\\
        &=\sum_{j=1}^{k}|G_j|+\sum_{n=2}^{k}(-1)^{n+1}\sum_{\overset{J\subseteq [k]}{|J|=n}}|\bigcap_{j\in J} G_j|\\
        &=\sum_{j=1}^{k}p+\sum_{n=2}^{k}(-1)^{n+1}\sum_{\overset{J\subseteq [k]}{|J|=n}}1\\
        &=kp+\sum_{n=2}^{k}(-1)^{n+1}\displaystyle{k \choose n},
    \end{align*} where $\displaystyle{k \choose n}=\dfrac{k!}{n!(k-n)!}$, is the binomial coefficient, i.e., it is the number of different subsets of $n$ elements that can be chosen from $[k]$. On the other hand, by the famous binomial theorem, we have \[\sum_{n=2}^{k}(-1)^{n+1}\displaystyle{k \choose n}=1-k.\] Therefore, $|G|=kp+1-k=\mathrm{IC}(G;C_p)(p-1)+1$. 
    
    Since $G_i$ is a subgroup of $G$ with $|G_i|=p$, then by the famous Lagrange's theorem, $p~|~|G|$, and thus $p~|~\mathrm{IC}(G;C_p)-1$. 
    
    \item[(2)] Let $k:=\mathrm{IC}(G;C_{pq})<\infty$, and consider nontrivial subgroups $G_1,\ldots,G_k$ of $G$ satisfying $G=G_1\cup\cdots\cup G_k$ for which for each $G_i$, there exists an injective homomorphism $f_i:G_i\to C_{pq}$. Then $C_{pq}$ has a copy of $G_i$ as a nontrivial proper subgroup. Since the only proper subgroups of $C_{pq}$ are the trivial group, a copy of $C_p$, and a copy of $C_{q}$, the only subgroups of each $G_i$ are the trivial group and itself. Then, for each $J\subseteq [k]$ with $|J|\geq 2$, we have $\bigcap_{j\in J} G_j=1$ as the trivial group. Furthermore, by Sylow's theorem, there exists a unique $i_0\in \{1,\ldots,k\}$ such that $|G_{i_0}|=q$ (because $G$ admits a unique $q$-Sylow subgroup).
    
  Similarly, as in the proof of Item (1), by the Poincaré's formula and the binomial theorem, we obtain \begin{align*}
   pq&=  |G|\\
   &=(k-1)p+q+1-k\\
   & =k(p-1)+q-p+1\\
     & =\mathrm{IC}(G;C_{pq})(p-1)+q-p+1. 
  \end{align*} and thus $\mathrm{IC}(G;C_{pq})=q+1$.
  \item[(3)] It is similar to Items (1) and (2). 
    
 \end{enumerate} 
\end{proof}

Let $G$ be a nontrivial finite group and $p$ be a prime number. By Remark~\ref{rem:def-ob}(4) and Theorem~\ref{thm:general-upper-bound} we have $\mathrm{IC}(G;C_p)<\infty$ if and only if the order of any nontrivial element of $G$ is $p$. Hence, we have the following example.

\begin{example}\label{exam:codomain-zp}
 Let $p$ be a prime number and $n\geq 1$. For $G=C_p^n$, the $n$th direct product of $C_p$, observe that the order of any nontrivial element of $C_p^n$ is $p$. Hence, by Theorem~\ref{thm:to-Zp}(1), we have \[\mathrm{IC}(C_p^n;C_p)=(p^n-1)/(p-1).\] For the case $n\geq 2$, we also obtain it using Example~\ref{exam:cyclic-z2z2}(2) together with Corollary~\ref{cor:ic-ciclic-equal}.
 
 On the other hand, recall that $\sigma(C_p^n)=p+1$ for any $n\geq 2$ \cite[Theorem 2, p. 45]{cohn1994}, \cite[Theorem, p. 1071]{rosenfeld1963}. Hence, \[\mathrm{IC}(C_p^n;C_p)-\sigma(C_p^n)=(p^n-1)/(p-1)-p-1\] for any $n\geq 2$. In particular, it shows that the difference $\mathrm{IC}(G;C_p)-\sigma(G)$ can be arbitrarily large.  
\end{example}

\medskip Moreover, Theorem~\ref{thm:to-Zp}(2) implies the following example.

\begin{example}
For each $n\geq 3$, we have the \textit{$n$-th dihedral group} \[D_n=C_n\rtimes C_2=\langle r,a:~r^n=1,a^2=1, ara=r^{-1}\rangle.\] By extension it is given by \[D_n=\{1,r,r^2,\ldots,r^{n-1},a,ra,r^2a,\ldots,r^{n-1}a\}.\] 
  
  We consider the case $n=p\geq 3$ a prime number. Hence, by Theorem~\ref{thm:to-Zp}(2), we have \[\mathrm{IC}(D_p;C_{2p})=p+1.\]     
\end{example}

\section{Applications}\label{sec:appli}
In this section, we present several applications motivated by the categorical viewpoint. 
\subsection{Sub-additivity} The following statement demonstrates the sub-additivity property of injective hom-complexity. 

\medskip Recall that any group $G$ cannot be the union of 2 proper subgroups. 

\begin{theorem}[Sub-additivity]\label{thm:category-union}
    Let $G,H$ be groups, and let $A,B,C$ be proper subgroups of $G$ such that $G=A\cup B\cup C$. Then: \[\max\{\mathrm{IC}(A;H),\mathrm{IC}(B;H),\mathrm{IC}(C;H)\}\leq \mathrm{IC}(G;H)\leq \mathrm{IC}(A;H)+\mathrm{IC}(B;H)+\mathrm{IC}(C;H).\]
\end{theorem}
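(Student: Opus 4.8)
The plan is to establish the two inequalities independently, each by reducing to machinery already in place. \textbf{Lower bound.} I would deduce $\max\{\mathrm{IC}(A;H),\mathrm{IC}(B;H),\mathrm{IC}(C;H)\}\le \mathrm{IC}(G;H)$ directly from the monotonicity clause of Theorem~\ref{thm:inequality-three-graphs}. Since $A$, $B$, and $C$ are subgroups of $G$, the three inclusion maps $A\hookrightarrow G$, $B\hookrightarrow G$, $C\hookrightarrow G$ are injective homomorphisms; applying that theorem with $G'$ taken to be each of $A$, $B$, $C$ in turn yields $\mathrm{IC}(A;H)\le\mathrm{IC}(G;H)$, $\mathrm{IC}(B;H)\le\mathrm{IC}(G;H)$, and $\mathrm{IC}(C;H)\le\mathrm{IC}(G;H)$, whence the maximum is bounded by $\mathrm{IC}(G;H)$. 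This argument is insensitive to whether any of the three quantities is infinite, since the monotonicity inequality holds in the extended sense.

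\textbf{Upper bound.} For $\mathrm{IC}(G;H)\le \mathrm{IC}(A;H)+\mathrm{IC}(B;H)+\mathrm{IC}(C;H)$, I would first dispose of the trivial case in which at least one summand is infinite, so that the right-hand side is $\infty$ and there is nothing to prove. Assuming all three are finite, write $a=\mathrm{IC}(A;H)$, $b=\mathrm{IC}(B;H)$, $c=\mathrm{IC}(C;H)$ and fix optimal injective quasi-homomorphisms $\{f_i:A_i\to H\}_{i=1}^{a}$, $\{g_j:B_j\to H\}_{j=1}^{b}$, and $\{h_k:C_k\to H\}_{k=1}^{c}$, where the $A_i$ cover $A$, the $B_j$ cover $B$, and the $C_k$ cover $C$. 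The key observation is that each $A_i$, being a subgroup of the subgroup $A$ of $G$, is itself a subgroup of $G$ (and likewise for the $B_j$ and $C_k$), while the union of all $a+b+c$ domains equals $A\cup B\cup C=G$. Hence the merged family $\{f_i\}\cup\{g_j\}\cup\{h_k\}$ is an injective quasi-homomorphism from $G$ to $H$ with $a+b+c$ members, and the definition of $\mathrm{IC}(G;H)$ gives the claimed bound.

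\textbf{Main difficulty.} There is no serious obstacle here; the content lies entirely in the two reductions above, and the hypothesis $G=A\cup B\cup C$ (rather than a cover by two proper subgroups, which is impossible) serves only to make the statement nonvacuous and is not otherwise used. The single point meriting care is bookkeeping: one must check that a subgroup of a subgroup is again a subgroup of $G$, and that Definition~\ref{defn:complexity} imposes no distinctness requirement on the domains $G_1,\dots,G_k$ (in contrast to the covering number $\sigma$), so that concatenating the three optimal families is legitimate even when some of the domains happen to coincide.
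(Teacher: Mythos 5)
Your proposal is correct and follows essentially the same route as the paper: the lower bound via the monotonicity clause of Theorem~\ref{thm:inequality-three-graphs} applied to the inclusions $A\hookrightarrow G$, $B\hookrightarrow G$, $C\hookrightarrow G$, and the upper bound by concatenating the three optimal injective quasi-homomorphisms into one covering family for $G$. The extra care you take with the infinite case and with the absence of a distinctness requirement in Definition~\ref{defn:complexity} is sound and only makes explicit what the paper leaves implicit.
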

\begin{proof}
 The inequality $\max\{\mathrm{IC}(A;H),\mathrm{IC}(B;H),\mathrm{IC}(C;H)\}\leq \mathrm{IC}(G;H)$ follows from Theorem~\ref{thm:inequality-three-graphs}, applied to the inclusions $A\hookrightarrow G$, $B\hookrightarrow G$ and $C\hookrightarrow G$. To establish the other inequality, suppose that $\mathrm{IC}(A;H)=m$, $\mathrm{IC}(B;H)=k$ and $\mathrm{IC}(C;H)=n$. Let $\{f_i:A_i\to H\}_{i=1}^{m}$ be an optimal injective quasi-homomorphism from $A$ to $H$, $\{g_j:B_j\to H\}_{j=1}^{k}$ be an optimal injective quasi-homomorphism from $B$ to $H$ and $\{d_r:C_r\to H\}_{r=1}^{n}$ be an optimal injective quasi-homomorphism from $C$ to $H$. Then, the combined collection $\{f_1:A_1\to H,\ldots,f_m:A_m\to H,g_1:B_1\to H,\ldots,g_k:B_k\to H,d_1:C_1\to H,\ldots,d_n:C_n\to H\}$ is an injective quasi-homomorphism from $G$ to $H$. Consequently, we have $\mathrm{IC}(G;H)\leq m+k+n=\mathrm{IC}(A;H)+\mathrm{IC}(B;H)+\mathrm{IC}(C;H)$.
\end{proof}

Theorem~\ref{thm:category-union} implies the following corollary:

\begin{corollary}\label{cor:linear-complexity}
    Let $G$ and $H$ be groups, and $A$ and $H', H''$ be proper subgroups of $G$ such that $G=A\cup H'\cup H''$. If $\mathrm{IC}(H';H)=\mathrm{IC}(H'';H)=1$, then \[\mathrm{IC}(A;H)\leq \mathrm{IC}(G;H)\leq \mathrm{IC}(A;H)+2.\] 
\end{corollary}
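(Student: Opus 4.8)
The plan is to obtain this corollary as a direct specialization of Theorem~\ref{thm:category-union}, plugging in the hypothesis that two of the three covering subgroups embed into $H$. First I would invoke Theorem~\ref{thm:category-union} with the given decomposition $G=A\cup H'\cup H''$ into proper subgroups, which yields
\[
\max\{\mathrm{IC}(A;H),\mathrm{IC}(H';H),\mathrm{IC}(H'';H)\}\leq \mathrm{IC}(G;H)\leq \mathrm{IC}(A;H)+\mathrm{IC}(H';H)+\mathrm{IC}(H'';H).
\]
The hypothesis $\mathrm{IC}(H';H)=\mathrm{IC}(H'';H)=1$ then collapses the upper bound: the right-hand side becomes $\mathrm{IC}(A;H)+1+1=\mathrm{IC}(A;H)+2$, giving the stated upper estimate immediately.

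For the lower bound I would argue that $\mathrm{IC}(A;H)\leq\mathrm{IC}(G;H)$ is just the $A$-term of the maximum appearing on the left side of the Theorem~\ref{thm:category-union} inequality, so it holds with no further work. (Equivalently, one could note that this is Theorem~\ref{thm:inequality-three-graphs} applied to the inclusion $A\hookrightarrow G$.) Combining the two halves produces exactly
\[
\mathrm{IC}(A;H)\leq \mathrm{IC}(G;H)\leq \mathrm{IC}(A;H)+2,
\]
as required.

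I do not anticipate a genuine obstacle here, since the corollary is a clean substitution into an already-established theorem; the only thing to be careful about is that Theorem~\ref{thm:category-union} requires $A$, $H'$, and $H''$ to be \emph{proper} subgroups of $G$, which is exactly what the corollary hypothesizes, so the theorem applies verbatim. The one substantive remark worth making explicit is \emph{why} the hypothesis $\mathrm{IC}(H';H)=1$ is legitimate input: by Remark~\ref{rem:def-ob}(1), $\mathrm{IC}(H';H)=1$ is equivalent to the existence of an injective homomorphism $H'\to H$, i.e.\ $H$ contains a copy of $H'$, and similarly for $H''$; this is the natural situation in which the corollary is used.

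\begin{proof}
Applying Theorem~\ref{thm:category-union} to the decomposition $G=A\cup H'\cup H''$ into proper subgroups of $G$, we obtain
\[
\max\{\mathrm{IC}(A;H),\mathrm{IC}(H';H),\mathrm{IC}(H'';H)\}\leq \mathrm{IC}(G;H)\leq \mathrm{IC}(A;H)+\mathrm{IC}(H';H)+\mathrm{IC}(H'';H).
\]
From the left-hand inequality we have $\mathrm{IC}(A;H)\leq \mathrm{IC}(G;H)$. Since $\mathrm{IC}(H';H)=\mathrm{IC}(H'';H)=1$, the right-hand inequality gives $\mathrm{IC}(G;H)\leq \mathrm{IC}(A;H)+1+1=\mathrm{IC}(A;H)+2$. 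Combining these yields
\[
\mathrm{IC}(A;H)\leq \mathrm{IC}(G;H)\leq \mathrm{IC}(A;H)+2,
\]
as desired.
\end{proof}
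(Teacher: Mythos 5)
Your proposal is correct and is exactly the argument the paper intends: the corollary is stated as an immediate consequence of Theorem~\ref{thm:category-union}, and substituting $\mathrm{IC}(H';H)=\mathrm{IC}(H'';H)=1$ into both sides of that theorem's inequality gives precisely the claimed bounds. Nothing further is needed.
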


We have the following example.

\begin{example}
 Let $G$ and $H$ be groups such that $\mathrm{IC}(G;H)>1$. Suppose that $\sigma(G)=3$ and $H'$, $H'', H'''$ are proper subgroups of $G$ such that $G=H'\cup H''\cup H'''$ and $\mathrm{IC}(H';H)=\mathrm{IC}(H'';H)=\mathrm{IC}(H''';H)=1$. By Corollary~\ref{cor:linear-complexity} together with Lemma~\ref{thm:lower-bound}, we obtain \[\mathrm{IC}(G;H)=3.\] 
\end{example}

\subsection{Inequality of the product}
Given two groups $G_1$ and $G_2$, the \textit{direct product} $G_1\times G_2$ is considered with the component-wise operation, i.e., $(g_1,g_2)\cdot (g_1',g_2')=(g_1g_1',g_2g_2')$. Given two homomorphisms $f_1:G_1\to H_1$ and  $f_2:G_2\to H_2$, their \textit{direct product} $f_1\times f_2:G_1\times G_2\to H_1\times H_2$ is defined as $(f_1\times f_2)(g_1,g_2)=(f_1(g_1),f_2(g_2))$. This forms a homomorphism from $G_1\times G_2$ to $H_1\times H_2$. Note that if $A$ is a subgroup of $G_1$ and $B$ is a subgroup of $G_2$, then $A\times B$ is a subgroup of $G_1\times G_2$. Furthermore, each coordinate injection $\iota_j:G_j\to G_1\times G_2$ (for $j=1,2$) defined by $\iota_1(g)=(g,1)$ and $\iota_2(g)=(1,g)$ are injective homomorphisms. Hence, we have $\mathrm{IC}(G_j;G_1\times G_2)=1$ for $j=1,2$.

\medskip We have the following statement.

\begin{proposition}[Coordinate Injections]\label{prop:diagonal}
     Let $G$, $G_1$, $G_2$, $H$, $H_1$, and $H_2$ be groups. The following holds:\begin{enumerate}
         \item[(1)] $\mathrm{IC}(G;H_1\times H_2)\leq\min\{\mathrm{IC}(G;H_1),\mathrm{IC}(G;H_2)\}$.
         \item[(2)] $\max\{\mathrm{IC}(G_1;H),\mathrm{IC}(G_2;H)\}\leq \mathrm{IC}(G_1\times G_2;H)$.
     \end{enumerate} 
\end{proposition}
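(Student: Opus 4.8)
The plan is to derive both inequalities directly from the monotonicity consequences of the Triangular Inequality (Theorem~\ref{thm:inequality-three-graphs}), using that the coordinate injections are injective homomorphisms, as already recorded in the paragraph preceding the statement.

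For (1), I would first recall that the coordinate injection $\iota_1:H_1\to H_1\times H_2$, $\iota_1(h)=(h,1)$, is an injective homomorphism, and likewise $\iota_2:H_2\to H_1\times H_2$. By the codomain-monotonicity part of Theorem~\ref{thm:inequality-three-graphs} (if there is an injective homomorphism $H'\to H$, then $\mathrm{IC}(G;H)\leq \mathrm{IC}(G;H')$ for every group $G$), applied with $H'=H_1$ and $H=H_1\times H_2$, I obtain $\mathrm{IC}(G;H_1\times H_2)\leq \mathrm{IC}(G;H_1)$; applying it once more with $H'=H_2$ gives $\mathrm{IC}(G;H_1\times H_2)\leq \mathrm{IC}(G;H_2)$. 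Taking the minimum of the two right-hand sides yields the desired bound.

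For (2), I would symmetrically use the coordinate injections $\iota_j:G_j\to G_1\times G_2$ (for $j=1,2$), which are injective homomorphisms. Now the domain-monotonicity part of Theorem~\ref{thm:inequality-three-graphs} (if there is an injective homomorphism $G'\to G$, then $\mathrm{IC}(G';H)\leq \mathrm{IC}(G;H)$ for every group $H$), applied with $G'=G_1$ and $G=G_1\times G_2$, gives $\mathrm{IC}(G_1;H)\leq \mathrm{IC}(G_1\times G_2;H)$, and applied with $G'=G_2$ gives $\mathrm{IC}(G_2;H)\leq \mathrm{IC}(G_1\times G_2;H)$. Taking the maximum of the two left-hand sides yields the desired bound.

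There is essentially no obstacle here: both statements are immediate corollaries of the already-established monotonicity of $\mathrm{IC}$ in each of its two variables. The only point deserving a moment's care is keeping track of the variance. The assignment $\mathrm{IC}(G;-)$ is order-reversing with respect to embeddings of the codomain, so passing to a product codomain can only lower the complexity, which produces the $\min$ upper bound in (1); whereas $\mathrm{IC}(-;H)$ is order-preserving with respect to embeddings of the domain, so passing to a product domain can only raise the complexity, which produces the $\max$ lower bound in (2).
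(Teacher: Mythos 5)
Your proof is correct and follows exactly the paper's own route: the paper likewise deduces both inequalities from Theorem~\ref{thm:inequality-three-graphs} applied to the coordinate injections, only without spelling out the variance bookkeeping that you make explicit. No issues.
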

\begin{proof}
 This follows from Theorem~\ref{thm:inequality-three-graphs}, applied to the coordinate injection.
\end{proof}

Proposition~\ref{prop:diagonal} implies the following example.

\begin{example}
  Let $G$ and $H$ be groups. The following holds: \[\mathrm{IC}(G;H\times H)\leq\mathrm{IC}(G;H)\leq  \mathrm{IC}(G\times G;H).\]
\end{example}

The following statement presents the inequality of the product. 

\begin{theorem}[Inequality of the Product]\label{thm:product-inequality-complexity}
    Let $G_1$, $G_2$, $H_1$ and $H_2$ be groups. Then, we have:
    \[\mathrm{IC}(G_1\times G_2;H_1\times H_2)\leq \mathrm{IC}(G_1;H_1)\cdot \mathrm{IC}(G_2;H_2).\]
\end{theorem}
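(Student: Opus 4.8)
The plan is to build an injective quasi-homomorphism for the product directly out of optimal injective quasi-homomorphisms for each factor, mimicking the combination argument used in the proof of Theorem~\ref{thm:category-union}. First I would dispose of the degenerate case: if either $\mathrm{IC}(G_1;H_1)=\infty$ or $\mathrm{IC}(G_2;H_2)=\infty$, then the right-hand side is $\infty$ and the inequality holds vacuously. So I may assume $m:=\mathrm{IC}(G_1;H_1)<\infty$ and $n:=\mathrm{IC}(G_2;H_2)<\infty$, and fix optimal injective quasi-homomorphisms $\{f_i:A_i\to H_1\}_{i=1}^{m}$ from $G_1$ to $H_1$ and $\{g_j:B_j\to H_2\}_{j=1}^{n}$ from $G_2$ to $H_2$, so that $G_1=A_1\cup\cdots\cup A_m$ and $G_2=B_1\cup\cdots\cup B_n$, with all $A_i\leq G_1$ and $B_j\leq G_2$ and every $f_i,g_j$ injective.

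Next I would form, for each pair $(i,j)\in[m]\times[n]$, the subgroup $A_i\times B_j$ of $G_1\times G_2$ (recalling from the preamble to this subsection that a product of subgroups is a subgroup of the product) together with the direct-product homomorphism $f_i\times g_j:A_i\times B_j\to H_1\times H_2$. The collection $\{f_i\times g_j\}_{(i,j)}$ has exactly $mn$ members, so it suffices to verify two things: that the subgroups $A_i\times B_j$ cover $G_1\times G_2$, and that each $f_i\times g_j$ is an injective homomorphism. For the covering, given $(x,y)\in G_1\times G_2$, choose $i$ with $x\in A_i$ and $j$ with $y\in B_j$; then $(x,y)\in A_i\times B_j$, whence $G_1\times G_2=\bigcup_{(i,j)}(A_i\times B_j)$.

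For injectivity of $f_i\times g_j$, I would argue that if $(f_i\times g_j)(a,b)=(f_i(a),g_j(b))$ equals the identity $(1,1)$, then $f_i(a)=1$ and $g_j(b)=1$; since $f_i$ and $g_j$ are injective this forces $a=1$ and $b=1$, so the kernel is trivial. Thus $\{f_i\times g_j:A_i\times B_j\to H_1\times H_2\}_{(i,j)\in[m]\times[n]}$ is an injective quasi-homomorphism from $G_1\times G_2$ to $H_1\times H_2$, and the definition of injective hom-complexity gives
\[
\mathrm{IC}(G_1\times G_2;H_1\times H_2)\leq mn=\mathrm{IC}(G_1;H_1)\cdot\mathrm{IC}(G_2;H_2).
\]
There is no genuine obstacle in this argument; the entire content lies in checking that the product maps remain injective, and this reduces immediately to the coordinatewise injectivity of the $f_i$ and $g_j$. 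The only point warranting care is the bookkeeping that the product covering indeed uses all $mn$ pairs and that optimality of the factor coverings is exactly what yields the product bound; no finer structural analysis of the subgroups $A_i\times B_j$ is needed.
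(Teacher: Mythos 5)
Your proposal is correct and follows essentially the same route as the paper's proof: cover $G_1\times G_2$ by the subgroups $A_i\times B_j$ and use the product maps $f_i\times g_j$, which are injective because each factor is. The only (harmless) additions are your explicit treatment of the infinite case and the kernel-is-trivial verification, both of which the paper leaves implicit.
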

\begin{proof}
  Let $m=\mathrm{IC}(G_1;H_1)$, $n=\mathrm{IC}(G_2;H_2)$, and let $\mathcal{M}_1=\{f_{i,1}:G_{i,1}\to H_1\}_{i=1}^{m}$, and  $\mathcal{M}_2=\{f_{j,2}:G_{j,2}\to H_2\}_{j=1}^{n}$ be optimal injective quasi-homomorphisms from $G_1$ to $H_1$ and from $G_2$ to $H_2$, respectively. The collection $\mathcal{M}_1\times \mathcal{M}_2=\{f_{i,1}\times f_{j,2}:G_{i,1}\times G_{j,2}\to H_1\times H_2\}_{i=1,j=1}^{m,n}$ is an injective quasi-homomorphism from $G_1\times G_2$ to $H_1\times H_2$. Thus, we have $\mathrm{IC}(G_1\times G_2;H_1\times H_2)\leq m\cdot n=\mathrm{IC}(G_1;H_1)\cdot \mathrm{IC}(G_2;H_2)$.  
\end{proof}

We obtain the following example.

\begin{example}
    Let $n\geq 1$ and $p$ be a prime number. By Lemma~\ref{thm:lower-bound} together with Theorem~\ref{thm:product-inequality-complexity}, we obtain \[\sigma\left(C_p^{n+1}\right)\leq\mathrm{IC}\left(C_p^{n+1};C_p^{n}\right)\leq \mathrm{IC}\left(C_p\times C_p;C_p\right).\] On the other hand, $\sigma\left(C_p^{n+1}\right)=p+1$ (by \cite[Theorem 2, p. 45]{cohn1994}, \cite[Theorem, p. 1071]{rosenfeld1963}) and $\mathrm{IC}\left(C_p\times C_p;C_p\right)=p+1$ (by Example~\ref{exam:codomain-zp}). Hence, \[\mathrm{IC}\left(C_p^{n+1};C_p^{n}\right)=p+1.\]
\end{example}

We close this section with the following remark, which presents a direct relation between injective hom-complexity and the sectional number.

\begin{remark}[Injective hom-complexity and sectional number]\label{rem:complexity-sectionalnumber}
Let $G$ and $H$ be groups. Given a homomorphism $f:H\to G$, we have \[\mathrm{IC}(G;H)\leq \text{sec}(f).\] Here, $\text{sec}(f)$ denotes the sectional number of $f$ as introduced in \cite{zapata2024} and developed in \cite{zapata2025}. Specifically, $\text{sec}(f)$ is the least positive integer $k$ such that there exist proper subgroups $G_1\ldots,G_k$ of $G$ with $G=G_1\cup\cdots\cup G_k$, and for each $G_i$, there exists a homomorphism $s_i:G_i\to H$ such that $f\circ s_i=\mathrm{incl}_{G_i}$ (and thus each $s_i:G_i\to H$ is an injective homomorphism), where $\mathrm{incl}_{G_i}:G_i\hookrightarrow G$ is the inclusion homomorphism.  
\end{remark}

\section*{Acknowledgments}
The first author would like to thank grant\#2023/16525-7, and grant\#2022/16695-7, S\~{a}o Paulo Research Foundation (FAPESP) for financial support.  

\section*{Declarations}
\begin{itemize}
\item Conflict of interest/Competing interests: The authors declare that there are no conflicts of interest.
\item Data availability: This manuscript does not report data generation or analysis.
\item Funding: \begin{itemize}
    \item Cesar A. Ipanaque Zapata, Fundação de Amparo à Pesquisa do Estado de São Paulo, 2023/16525-7.
    \item Cesar A. Ipanaque Zapata, Fundação de Amparo à Pesquisa do Estado de São Paulo, 2022/16695-7.
\end{itemize}
\item Author contribution: Cesar A. Ipanaque Zapata, and Martha O. Gonzales Bohorquez wrote, read, and approved the final manuscript.
\end{itemize}


\bibliographystyle{plain}

\end{document}